\documentclass[oneside,english]{amsart}
\usepackage[T1]{fontenc}
\usepackage[latin9]{inputenc}
\usepackage{geometry}
\geometry{verbose,tmargin=2.5cm,bmargin=2.5cm,lmargin=2cm,rmargin=2cm}
\synctex=-1
\usepackage{amsthm}
\usepackage{amstext}
\usepackage{amssymb}
\usepackage{esint}

\makeatletter
\numberwithin{equation}{section}
\numberwithin{figure}{section}

\@ifundefined{date}{}{\date{}}
\theoremstyle{plain}
\newtheorem{thm}{\protect\theoremname}
  \theoremstyle{definition}
    \newtheorem{defn}{\protect\definitionname}
  \theoremstyle{plain}

  \theoremstyle{remark}
  \newtheorem*{rem*}{\protect\remarkname}
  \newtheorem*{rem}{\protect\remarkname}
  \theoremstyle{definition}
  \newtheorem{example}{\protect\examplename}

\numberwithin{equation}{section} 
\numberwithin{figure}{section} 
\usepackage{babel}
\providecommand{\definitionname}{Definition}
  \providecommand{\examplename}{Example}
  \providecommand{\remarkname}{Remark}
\providecommand{\theoremname}{Theorem}

\usepackage{babel}
\providecommand{\definitionname}{Definition}
  \providecommand{\examplename}{Example}
  \providecommand{\propositionname}{Proposition}
  \providecommand{\remarkname}{Remark}
\providecommand{\theoremname}{Theorem}

\numberwithin{equation}{section} 
\numberwithin{figure}{section} 
\usepackage{babel}
\providecommand{\definitionname}{Definition}
  \providecommand{\examplename}{Example}
  \providecommand{\remarkname}{Remark}
\providecommand{\theoremname}{Theorem}

\usepackage{babel}
\providecommand{\definitionname}{Definition}
  \providecommand{\examplename}{Example}
  \providecommand{\propositionname}{Proposition}
  \providecommand{\remarkname}{Remark}
\providecommand{\theoremname}{Theorem}

\usepackage{babel}

\makeatother

\usepackage{babel}
\begin{document}

\title{Alternative Lagrangians obtained by scalar deformations}

\author[Constantinescu]{Oana A. Constantinescu}
\address{Faculty of Mathematics \\
 Alexandru Ioan Cuza University \\
 Bd. CAROL I, nr. 11, 700506, Ia\c{s}i, Romania}

\email{oanacon@uaic.ro}

\author[Taha]{Ebtsam H. Taha}
\address{Department of Mathematics, Faculty of Science, Cairo University,
12613 Giza, Egypt}

\email{ebtsam@sci.cu.edu.eg, ebtsam.h.taha@hotmail.com}
\begin{abstract}
We study mechanical systems that can be recast into the form of a
system of genuine Euler-Lagrange equations. The equations of motions
of such systems are initially equivalent to the system of Lagrange
equations of some Lagrangian $L$, including a covariant force field.
We find necessary and sufficient conditions for the existence of a
differentiable function $\Phi:\mathbb{R}\rightarrow\mathbb{R}$ such
that the initial system is equivalent to the system of Euler-Lagrange
equations of the deformed Lagrangian $\Phi(L)$. We construct various
examples of such deformations. 
\end{abstract}
\maketitle

\section{Introduction}

When a mechanical system is subject only to conservative forces derived
from a potential, its equations of motions take the form of a system
of second order ordinary differentiable equations (SODE) of the type

\begin{equation}
\frac{d^{2}x^{i}}{dt^{2}}+2G^{i}\left(x,\dot{x}\right)=0,\ i\in\overline{1,n}\label{eq:sode}
\end{equation}

and this system is equivalent to the system of Euler-Lagrange equations
of some Lagrangian $L$, 
\[
\frac{d}{dt}\left(\frac{\partial L}{\partial\dot{x}^{i}}\right)-\frac{\partial L}{\partial x^{i}}=0.
\]
The Lagrangian energy remains constant along the solutions of the
system.

But many mechanical systems do not have a pure Lagrangian description.
Their equations of motions can be written in the form 
\[
\]
\begin{equation}
\frac{d}{dt}\left(\frac{\partial L}{\partial\dot{x}^{i}}\right)-\frac{\partial L}{\partial x^{i}}=\sigma_{i}(x,\dot{x}),\label{eq:eqnoncons}
\end{equation}
where $\sigma_{i}(x,\dot{x})$ is a covariant tensor field that might
represent some external forces. The most studied cases of non-conservative
mechanical systems are dissipative and gyroscopic systems \cite{key-2,key-6,key-9,key-11}.
For example, for classical dissipative systems, we have $\sigma_{i}(x,\dot{x})=\frac{\partial D}{\partial\dot{x}^{i}}$,
with $D$ a quadratic function in the velocities with a negative definite
coefficients matrix. In this case the energy of $L$ decays in time.
In \cite{key-2-1} the authors proved that any SODE on a two dimensional
manifolds is of dissipative type, in a more general sense: $\sigma=\sigma_{i}(x,\dot{x})dx^{i}$
is a $d_{J}$ closed semi-basic $1$-form, where $J$ is the vertical
endomorphism.

In some particular cases there exist several different Lagrangians,
called alternative Lagrangians, such that the system (\ref{eq:sode})
is equivalent to the system of Euler-Lagrange equations of each of
those Lagrangians. Alternative Lagrangian can be used, for example,
to construct constant of motions \cite{key-4}.

One can work with alternative Lagrangians in order to simplify computations.
Given a geometric structure on a differentiable manifold, it is possible
to consider more general structures on the same manifold, related
to the first one, and study the relations between the geometric objects
induced by these different structures. For example, in \cite{key-1},
for a Finsler manifold $(M,F)$ and a differential deformation $\alpha:\mathbb{R}^{+}\rightarrow\mathbb{R}$,
a Lagrangian manifold $\left(M,L=\alpha(F^{2})\right)$ was constructed.
In general the computations in the associated Lagrangian manifold
are simpler and one can obtain interesting information about the initial
Finsler manifold. The examples presented on this article involve Antonelli's
ecological metric and Synge metric \cite{key-0-1,key-0}, with various
applications in biology and physics.

During the last few years non-standard Lagrangians are considered
good candidates to explain non-conservative dynamical systems. Non-standard
Lagrangians are Lagrangians that can not be expressed as differences
between kinetic energy terms and potential energy terms. They can
be of exponential type, of power-law type, or even radical type \cite{C-N}.
They were used, for instance, to study second order Ricati and Abel
equations \cite{CRS} or non-inertial dynamics \cite{key-7-1}.

In a recent paper, \cite{key-3}, starting with a given Lagrangian
$L$, the authors determined all deformations $\Psi:\mathbb{R}\rightarrow\mathbb{R},$
such that $\Psi(L)$ and $L$ give the same dynamical vector field.
This article was the starting point for the study developed here.
This paper was also inspired by \cite{key-0-1,key-1,key-2,C-N,key-7-1,MZE,SA},
in which the authors searched for alternative or non-standard Lagrangians.\\

In this article we are interested in the following situation. We start
with a SODE in normal form (\ref{eq:sode}), which can be written
in an equivalent way as in (\ref{eq:eqnoncons}), for some Lagrangian
$L$ and some semi-basic 1-form $\sigma=\sigma_{i}(x,\dot{x})dx^{i}$.
We emphasize that $\sigma$ is not apriori given. More exactly, given
a SODE (\ref{eq:sode}), we can choose an arbitrary Lagrangian $L$
and find a suitable 1-form $\sigma$ such that (\ref{eq:sode}) and
(\ref{eq:eqnoncons}) are equivalent. Inspired by \cite{key-3}, we
determine necessary and sufficient conditions for the existence of
differentiable deformations $\Phi:\mathbb{R}\rightarrow\mathbb{R}$,
such that the system (\ref{eq:sode}) is equivalent to the system
of Euler-Lagrange equations of the new Lagrangian $\Phi(L)$: 
\begin{equation}
\frac{d}{dt}\left(\frac{\partial\Phi(L)}{\partial\dot{x}^{i}}\right)-\frac{\partial\Phi(L)}{\partial x^{i}}=0.\label{eq:EL|phi(L)}
\end{equation}

The study of the solutions of such a SODE, from geometric point of
view, consists in identifying the system (\ref{eq:sode}) with a {\small{second
order vector field or a semi-spray. This means that $S$ is a vector
field on $TM$, ($M$ an $n$-dimensional differentiable manifold),
with }}$JS=\mathbf{C}$, where $J$ is the vertical endomorphism and
$\mathbf{C}$ the Liouville vector field \cite{key-7}. The solutions
of the above system are called the geodesics of the semispray.

{\small{Locally, $S$ can be represented in the natural basis of $TM$
as follows}} 
\[
S=y^{i}\frac{\partial}{\partial x^{i}}-2G^{i}(x,\dot{x})\frac{\partial}{\partial\dot{x}^{i}}.
\]

We use an intrinsic formulation with no explicit choice of coordinates.
The main result of this article is the following.\\

Theorem 1: Consider a SODE of type (\ref{eq:sode}) and $S\in\mathfrak{X}(TM)$
the associated semi-spray. Suppose that the SODE can be written in
an equivalent way as $\delta_{S}L=\sigma$, for some Lagrangian $L$
and some semi-basic 1-form $\sigma=\sigma_{i}(x,\dot{x})dx^{i}$,
with $S(L)\neq0$ and $\mathbf{C}(L)\neq0$. Then there exists a non
constant, differentiable (of class $C^{2}$) function $\Phi:\mathbb{R}\rightarrow\mathbb{R}$
such that $S$ is a Lagrangian vector field with the corresponding
Lagrangian $\Phi(L)$ if and only if the following three conditions
are satisfied: 
\[
\sigma=\frac{S(E_{L})}{\mathbf{C}(L)}d_{J}L,
\]

\[
\frac{\Phi''(L)}{\Phi'(L)}=-\frac{S(E_{L})}{S(L)\mathbb{\mathbf{C}}(L)},
\]

\[
\left(\Phi''\frac{\partial L}{\partial y^{j}}\frac{\partial L}{\partial y^{i}}+\Phi'\frac{\partial^{2}L}{\partial y^{i}\partial y^{j}}\right)_{i,j}\neq O_{n},
\]
where $O_{n}$ is the null matrix and $E_{L}$ is the energy of $L$.\\

This theorem provides a class of SODEs, of the form 
\[
{\mathcal{L}_{S}}d_{J}L-dL=\frac{S(E_{L})}{\mathbb{\mathbf{C}}(L)}d_{J}L,
\]
which admit a pure Lagrangian description in terms of a deformed Lagrangian
$\Phi(L):\,\,{\mathcal{L}_{S}}d_{J}\Phi(L)-d\Phi(L)=0$.

We give various examples of explicit deformations. Even if most of
the examples are constructed, they are subordinated to classes of
scalar deformations already used in applications in theoretical Physics
or Biology. The examples of the article start with regular Lagrangians
$L$ and include both regular and singular Lagrangians $\Phi(L)$.

We emphasize that Theorem 1 gives a very simple way to check in practice
if the equations of motions of a non-conservative mechanical system
can be recast into the form of genuine Euler-Lagrange equations, with
the new Lagrangian obtained using a scalar deformation of the initial
one. As particular cases, we discussed dissipative and homogeneous
SODEs.

\section{Preliminaries}

Consider an $n$-dimensional smooth manifold $M$ and $TM$ its tangent
bundle. The Local coordinates $(x^{i})$ on $M$ induce local coordinates
$(x^{i},y^{i})$ on $TM$. We will use the following notations: $C^{\infty}(TM)$
for the real algebra of smooth functions on $TM$, $\mathfrak{X}(TM)$
for the Lie algebra of smooth vector fields on $TM$ and $\Lambda^{k}(TM)$
for the $C^{\infty}(TM)$ module of differentiable $k$-forms on $TM$.

Let $\pi:TM\to M$ be the canonical submersion and $VTM:u\in TM\to V_{u}TM=\operatorname{Ker}d_{u}\pi\subset T_{u}TM$,
the vertical distribution. A vector field $X\in\mathfrak{X}(TM)$
is vertical if $X_{u}\in V_{u}TM,\,\,\forall u\in TM$.

Consider $\mathbf{C}\in\mathfrak{X}(TM)$ the Liouville vector field
and $J$ the tangent structure (vertical endomorphism), locally given
by: 
\begin{eqnarray*}
\mathbf{C}=y^{i}\frac{\partial}{\partial y^{i}},\quad J=\frac{\partial}{\partial y^{i}}\otimes dx^{i}.
\end{eqnarray*}

A \emph{semi-spray}, or a second order vector field, is a globally
defined vector field on $TM$, $S\in\mathfrak{X}(TM)$, that satisfies
$JS=\mathbf{C}$. In local coordinates, a semi-spray $S$ can be expressed
as follows: 
\begin{eqnarray*}
S=y^{i}\frac{\partial}{\partial x^{i}}-2G^{i}(x,y)\frac{\partial}{\partial y^{i}},
\end{eqnarray*}
where $G^{i}(x,y)$ are smooth functions on $TM$.

Any system of SODE in normal form (\ref{eq:sode}) can be identified
with a semi-spray on $TM$, with local coefficients $G^{i}(x,y)$.

A \emph{spray} is a homogeneous semi-spray of degree $2$ with respect
to fiber coordinates $y$. This means that $[\mathbf{C},S]=S$ $\Leftrightarrow$
the functions $G^{i}(x,y)$ are homogeneous of degree 2: $G^{i}(x,ry)=r^{2}G^{i}(x,y)\,\,\forall r\in\mathbb{R}$.

A curve $c:I\to M$ is called \emph{regular} if its tangent lift takes
values in the slashed tangent bundle, $c':I\to TM\setminus\{0\}$.
A regular curve is called a \emph{geodesic} of the semi-spray $S$
if $S\circ c'=c''$. Locally, $c(t)=(x^{i}(t))$ is a geodesic of
$S$ if 
\begin{eqnarray}
\frac{d^{2}x^{i}}{dt^{2}}+2G^{i}\left(x,\frac{dx}{dt}\right)=0.\label{d2g}
\end{eqnarray}

A \emph{Lagrangian} on $TM$ is a smooth function $L:TM\to\mathbb{R}$
whose Hessian with respect to the fiber coordinates 
\begin{eqnarray}
g_{ij}=\frac{\partial^{2}L}{\partial y^{i}\partial y^{j}}\label{gij}
\end{eqnarray}
is non-trivial. $L$ is said to be \emph{regular} if the Poincar\'e-Cartan
$2$-form $dd_{J}L$ is a symplectic form on $TM$, where $d_{J}=i_{J}\circ d-d\circ i_{J}$.
Locally, this is equivalent to the fact that the metric (\ref{gij})
has maximal rank $n$ on $TM$.

For a Lagrangian $L$, we consider $E_{L}=\mathbf{\mathbb{\mathbf{C}}}(L)-L$
its \emph{Lagrangian energy}.\\

A \emph{semi-basic} form on $TM$ is a form on $TM$ which vanishes
whenever one of its arguments is vertical. A \emph{semi-basic} vector
valued form $\mu$ on $TM$ of degree $m$ is a map $\mu:\bigotimes^{m}\mathfrak{X}(TM)\rightarrow\mathfrak{X}(VTM)$
which vanishes whenever one of its arguments is vertical. For example,
the tangent structure $J$ is a vector valued semi-basic $1$-form.

For an arbitrary semi-spray $S$ and a Lagrangian $L$, we consider
the following semi-basic $1$-form\emph{, }called\emph{ the Lagrange
differential} \cite{key-12}: 
\begin{eqnarray*}
\delta_{S}L & = & {\mathcal{L}_{S}}d_{J}L-dL=\left\{ S\left(\frac{\partial L}{\partial y^{i}}\right)-\frac{\partial L}{\partial x^{i}}\right\} dx^{i}.
\end{eqnarray*}

We introduce next some of the derivations within the Frölicher-Nijenhuis
formalism, \cite{key-7}, which will be used in sequel.

Consider a vector valued $q$-form $Q$ on $TM$. Let $i_{Q}:\Lambda^{k}(TM)\to\Lambda^{k+q-1}(TM)$
be the derivation of degree $(q-1)$, given by 
\begin{eqnarray*}
i_{Q}\theta(X_{1},...,X_{k+p-1})=\frac{1}{q!(k-1)!}\sum_{\sigma\in S_{k+q-1}}\operatorname{sign}(\sigma)\theta\left(Q(X_{\sigma(1)},...,X_{\sigma(q)}),X_{\sigma(q+1)},...,X_{\sigma(k+q-1)}\right),
\end{eqnarray*}
where $S_{k+q-1}$ is the permutation group of $\{1,..,k+q-1\}$.
In particular, for the vertical endomorphism $J$ and $\theta\in\Lambda^{1}(TM)$,
we obtain $\left(i_{J}\theta\right)(X)=\theta(JX),$ for all $X\in\mathfrak{X}(TM)$.
Note that $i_{Q}$ is trivial on functions.

Let $d_{Q}:\Lambda^{k}(TM)\to\Lambda^{k+q}(TM)$ be the derivation
of degree $q$, which given by 
\begin{eqnarray*}
d_{Q}=i_{Q}\circ d-(-1)^{q-1}d\circ i_{Q}.
\end{eqnarray*}
For instance, $\left(d_{J}\theta\right)(X,Y)=\left(JX\right)(\theta Y)-\left(JY\right)(\theta X)-\theta[JX,Y]+\theta[JY,X]-\theta\left(J[X,Y]\right)$,
$\forall X,Y\in\mathfrak{X}(TM),\,\,\forall\theta\in\Lambda^{1}(TM)$.
Evidently $d_{X}=\mathcal{L}_{X}$ is the Lie derivative if $X\in\mathfrak{X}(TM)$.
The derivation $d_{Q}$ commutes with the exterior derivative $d$.

For two vector valued forms $K$ and $Q$ on $TM$, of degrees $k$
and $q$, respectively, we consider the Frï¿œlicher-Nijenhuis bracket
$[K,Q]$, which is the vector valued $(k+q)$-form, uniquely determined
by 
\begin{eqnarray}
d_{[K,Q]}=d_{k}\circ d_{Q}-(-1)^{kq}d_{Q}\circ d_{K}.\label{dkl}
\end{eqnarray}
For example, $[X,J]=\mathcal{L}_{X}J=\mathcal{L}_{X}\circ J-J\circ\mathcal{L}_{X}$,
$\forall X\in\mathfrak{X}(TM)$.

The definition of the tangent structure $J$ leads to $[J,J]=0$ and
that $d_{J}^{2}=0$ follows from the formula \eqref{dkl}. Therefore,
any $d_{J}$-exact form is $d_{J}$-closed and according to a Poincar\'e-type
Lemma \cite{13-1}, any $d_{J}$-closed form is locally $d_{J}$-exact.

\begin{defn} Given a semi-spray $S$, we say that $S$ is $Lagrangian$
if there exists a Lagrangian $L$ such that $\delta_{S}L=0$. This
means that the solutions of the system (\ref{eq:sode}) are among
the solutions of the Euler-Lagrange equations of some Lagrangian $L$
: $\frac{d}{dt}\left(\frac{\partial L}{\partial\dot{x}^{i}}\right)-\frac{\partial L}{\partial x^{i}}=0$.
If $L$ is regular, these two systems are equivalent \cite{key-2-1}.

\end{defn}

\section{Deformations of Lagrangians: main results}

Consider a SODE in normal form (\ref{eq:sode}), which can be written
in an equivalent form as in (\ref{eq:eqnoncons}), for some Lagrangian
$L$. This system is equivalent to $\delta_{S}L=\sigma$, where $S$
is the semi-spray on $TM$ with local coefficients $G^{i}(x,y)$ and
$\sigma$ is a semi-basic 1-form $\sigma=\sigma_{i}(x,y)dx^{i}$. 

In this section we determine necessary and sufficient conditions for
the existence of non-constant, differentiable of class $C^{2}$ deformations
$\Phi:\mathbb{R}\rightarrow\mathbb{R}$, such that the system (\ref{eq:sode})
is equivalent to the system of Euler-Lagrange equations of $\Phi(L)$:
\[
\frac{d}{dt}\left(\frac{\partial\Phi(L)}{\partial\dot{x}^{i}}\right)-\frac{\partial\Phi(L)}{\partial x^{i}}=0\Longleftrightarrow\delta_{S}\Phi(L)=0.
\]
Now $S$ is a Lagrangian second order vector field corresponding to
the Lagrangian $\Phi(L)$.

Using Frï¿œlicher-Nijenhuis formalism and the theory of derivations,
we obtain the main result of the paper. 

\begin{thm} \label{thm:main}Consider a SODE of type (\ref{eq:sode})
and $S\in\mathfrak{X}(TM)$ the associated semi-spray. Suppose that
the SODE can be written in an equivalent way as $\delta_{S}L=\sigma$,
for some Lagrangian $L$ and some semi-basic 1-form $\sigma=\sigma_{i}(x,\dot{x})dx^{i}$,
with $S(L)\neq0$ and $\mathbf{C}(L)\neq0$. Then there exists a non-constant,
differentiable of class $C^{2}$ function $\Phi:\mathbb{R}\rightarrow\mathbb{R}$
such that $S$ is a Lagrangian vector field with the corresponding
Lagrangian $\Phi(L)$ if and only if the following three conditions
are satisfied:

\begin{equation}
\sigma=\frac{S(E_{L})}{\mathbf{C}(L)}d_{J}L,\label{eq:3}
\end{equation}

\begin{equation}
\frac{\Phi''(L)}{\Phi'(L)}=-\frac{S(E_{L})}{S(L)\mathbf{C}(L)},\label{eq:2}
\end{equation}
\begin{equation}
\left(\Phi''\frac{\partial L}{\partial y^{j}}\frac{\partial L}{\partial y^{i}}+\Phi'\frac{\partial^{2}L}{\partial y^{i}\partial y^{j}}\right)_{i,j}\neq O_{n},\label{eq:0}
\end{equation}
where $O_{n}$ is the null matrix. \end{thm}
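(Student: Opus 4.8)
The plan is to compute $\delta_{S}\Phi(L)$ explicitly in terms of $\delta_{S}L$, $d_{J}L$, and the functions $S(L)$, $\mathbf{C}(L)$, $E_L$, and then to match it against the hypothesis $\delta_{S}L=\sigma$. First I would use the fact that $d_{J}$ is a derivation of degree $1$ together with $d_{J}^{2}=0$ to write $d_{J}\Phi(L)=\Phi'(L)\,d_{J}L$ (this is just the chain rule for $d_{J}$, since $d_{J}$ is trivial on the $0$-form $\Phi'(L)$ only after multiplication, so care is needed: $d_{J}(\Phi(L))=\Phi'(L)d_{J}L$ holds because $i_J$ kills functions and $d\Phi(L)=\Phi'(L)dL$). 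Then $\delta_{S}\Phi(L)=\mathcal{L}_{S}d_{J}\Phi(L)-d\Phi(L)=\mathcal{L}_{S}\bigl(\Phi'(L)d_{J}L\bigr)-\Phi'(L)dL$. Expanding the Lie derivative via Leibniz, $\mathcal{L}_{S}\bigl(\Phi'(L)d_{J}L\bigr)=S(\Phi'(L))\,d_{J}L+\Phi'(L)\,\mathcal{L}_{S}d_{J}L=\Phi''(L)S(L)\,d_{J}L+\Phi'(L)\,\mathcal{L}_{S}d_{J}L$. Hence
\begin{equation}
\delta_{S}\Phi(L)=\Phi'(L)\,\delta_{S}L+\Phi''(L)\,S(L)\,d_{J}L.\label{eq:plan-key}
\end{equation}
This is the central identity; everything else is a matter of reading off conditions from it.

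Next I would impose $\delta_{S}\Phi(L)=0$. Using $\delta_{S}L=\sigma$, \eqref{eq:plan-key} becomes $\Phi'(L)\,\sigma=-\Phi''(L)\,S(L)\,d_{J}L$. Since $\Phi$ is non-constant we may work on the (dense) open set where $\Phi'(L)\neq 0$, so $\sigma=-\dfrac{\Phi''(L)}{\Phi'(L)}S(L)\,d_{J}L$. This already forces $\sigma$ to be proportional to $d_{J}L$. To pin down the proportionality factor I would contract with the Liouville field $\mathbf{C}$: since $d_{J}L$ evaluated on $\mathbf{C}$ gives $i_{\mathbf{C}}d_{J}L$, and one has the standard identity $i_{\mathbf{C}}d_{J}L=\mathbf{C}(L)$ (or, more usefully, $i_{S}d_{J}L=\mathbf{C}(L)$ together with $\sigma(S)=S(E_L)$ — the latter being the well-known fact that $\delta_{S}L(S)=\mathcal{L}_{S}E_{L}=S(E_L)$, which follows from $i_{S}\mathcal{L}_{S}d_{J}L-i_{S}dL$ and $i_S d_J L=\mathbf{C}(L)=E_L+L$). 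Evaluating the relation $\Phi'(L)\sigma=-\Phi''(L)S(L)d_JL$ on $S$ yields $\Phi'(L)S(E_L)=-\Phi''(L)S(L)\mathbf{C}(L)$, which is exactly \eqref{eq:2}. Substituting \eqref{eq:2} back into $\sigma=-\dfrac{\Phi''(L)}{\Phi'(L)}S(L)\,d_JL$ gives $\sigma=\dfrac{S(E_L)}{\mathbf{C}(L)}d_JL$, which is \eqref{eq:3}. Conversely, if \eqref{eq:3} and \eqref{eq:2} hold, substituting into \eqref{eq:plan-key} gives $\delta_{S}\Phi(L)=\Phi'(L)\sigma+\Phi''(L)S(L)d_JL=\Phi'(L)\left(\dfrac{S(E_L)}{\mathbf{C}(L)}+\dfrac{\Phi''(L)}{\Phi'(L)}S(L)\right)d_JL=0$, so $S$ is Lagrangian for $\Phi(L)$.

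Finally, condition \eqref{eq:0} is exactly the requirement that $\Phi(L)$ actually be a Lagrangian in the sense of the paper, i.e. that its fiber Hessian be non-trivial: a direct computation gives $\dfrac{\partial^{2}\Phi(L)}{\partial y^{i}\partial y^{j}}=\Phi''(L)\dfrac{\partial L}{\partial y^{i}}\dfrac{\partial L}{\partial y^{j}}+\Phi'(L)\dfrac{\partial^{2}L}{\partial y^{i}\partial y^{j}}$, and this matrix being nonzero is precisely \eqref{eq:0}; if it vanished identically then $\Phi(L)$ would be affine in the velocities and $\delta_S\Phi(L)=0$ would carry no dynamical content. So \eqref{eq:0} must be appended as a genuine extra condition rather than derived. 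I expect the main obstacle to be purely bookkeeping: establishing the contraction identities $i_{S}d_{J}L=\mathbf{C}(L)$ and $\delta_{S}L(S)=S(E_L)$ cleanly within the Fr\"olicher--Nijenhuis formalism (these are standard but require careful sign tracking), and making sure the passage from "$\Phi'(L)\sigma=-\Phi''(L)S(L)d_JL$ on the open set $\Phi'(L)\neq0$" to "$\sigma=\frac{S(E_L)}{\mathbf{C}(L)}d_JL$ everywhere" is justified by continuity and the hypotheses $S(L)\neq0$, $\mathbf{C}(L)\neq0$. One should also note that \eqref{eq:2} is a first-order linear ODE for $\log\Phi'$ along the level structure of $L$, whose right-hand side must depend on $L$ alone for a solution $\Phi$ to exist — this compatibility is implicitly guaranteed once \eqref{eq:3} holds, and I would remark on it but not belabor it.
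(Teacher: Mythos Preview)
Your proposal is correct and follows essentially the same route as the paper: you derive the key identity $\delta_{S}\Phi(L)=\Phi'(L)\,\delta_{S}L+\Phi''(L)\,S(L)\,d_{J}L$ (the paper does this in local coordinates, you via the Leibniz rule for $\mathcal{L}_{S}$), then contract with $S$ using $i_{S}d_{J}L=\mathbf{C}(L)$ and $i_{S}\sigma=S(E_{L})$ to obtain \eqref{eq:2}, substitute back for \eqref{eq:3}, and read off \eqref{eq:0} as the fiber-Hessian nondegeneracy of $\Phi(L)$. Your treatment of the converse is in fact more explicit than the paper's, which simply declares it ``trivial.''
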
 

\begin{proof} We first prove the forward direction implication. For
that we investigate the relation between the equations of motion for
$L$ and for its deformation $\Phi(L)$. To do so, we first compute
the Lagrange differential of the deformed Lagrangian $\Phi(L)$: 
\begin{eqnarray*}
\delta_{S}\Phi(L) & = & \left[S\left(\frac{\partial\Phi(L)}{\partial y^{i}}\right)-\frac{\partial\Phi(L)}{\partial x^{i}}\right]dx^{i}=\left[S\left(\Phi'(L)\frac{\partial L}{\partial y^{i}}\right)-\Phi'(L)\frac{\partial L}{\partial x^{i}}\right]dx^{i}\\
 & = & \left[\Phi''(L)S(L)\frac{\partial L}{\partial y^{i}}+\Phi'(L)S\left(\frac{\partial L}{\partial y^{i}}\right)-\Phi'(L)\frac{\partial L}{\partial x^{i}}\right]dx^{i}.
\end{eqnarray*}

Since $d_{J}L=\frac{\partial L}{\partial y^{i}}dx^{i}$, the above
relation is equivalent to 
\[
\delta_{S}\Phi(L)=\Phi''(L)S(L)d_{J}L+\Phi'(L)\delta_{S}L.
\]

Now, suppose that $S$ is a Lagrangian vector field with the corresponding
Lagrangian $\Phi(L)$.

The conditions $\delta_{S}L=\sigma$ and $\delta_{S}\Phi(L)=0$ are
simultaneously satisfied if and only if 
\begin{equation}
\Phi''(L)S(L)d_{J}L+\Phi'(L)\sigma=0.\label{eq:delta_Sphi(L)}
\end{equation}

Let $X=S$ and $K=J$ in the formula $i_{X}d_{K}=-d_{K}i_{X}+\mathcal{L}_{KX}+i_{[K,X]}$,
(\cite{key-7}, Appendix A, page 215). Then 
\begin{equation}
i_{S}d_{J}+d_{J}i_{S}=\mathcal{L}_{JS}+i_{[J,S]}.\label{eq:i_Sd_J}
\end{equation}

Applying $i_{S}$ to the equation (\ref{eq:delta_Sphi(L)}), we get
$\Phi''(L)=-\frac{S(E_{L})}{S(L)\mathbf{C}(L)}\Phi'(L)$. This formula
is obtained using (\ref{eq:i_Sd_J}), noting that $i_{S}d_{J}L=\mathcal{L}_{\mathbf{C}}(L)=\mathbf{C}(L)$
and{\small{ 
\[
i_{S}\sigma=i_{S}\left(\mathcal{L}_{S}d_{J}L-dL\right)=i_{S}\left(di_{S}d_{J}L+i_{S}dd_{J}L-dL\right)=i_{S}\left(d\mathbf{C}(L)-dL\right)=i_{S}dE_{L}=\left(\mathcal{L_{S}}-di_{S}\right)(E_{L})=S(E_{L}).
\]
}}{\small \par}

Substituting $\Phi''(L)$ in (\ref{eq:delta_Sphi(L)}) we obtain the
required expression for $\sigma$ in (\ref{eq:3}).

If we want $\Phi(L)$ to be also a Lagrangian, then we first need
to investigate its Hessian with respect to the fiber coordinates $(y^{i})$.

Let us denote $\tilde{g}_{ij}=\frac{\partial^{2}\Phi(L)}{\partial y^{i}\partial y^{j}}$.
It results $\tilde{g}_{ij}=\Phi''\frac{\partial L}{\partial y^{j}}\frac{\partial L}{\partial y^{i}}+\Phi'g_{ij}$.
Thus $\Phi(L)$ is a Lagrangian if there exist a deformation $\Phi$
such that $\left(\Phi''\frac{\partial L}{\partial y^{j}}\frac{\partial L}{\partial y^{i}}+\Phi'g_{ij}\right)$
is a non-trivial matrix. In other words, the non-constant function
$\Phi$ has to satisfy the condition 
\begin{equation}
\exists\, i,j\in\overline{1,n}\,\, g_{ij}\neq-\frac{\Phi''}{\Phi'}\frac{\partial L}{\partial y^{j}}\frac{\partial L}{\partial y^{i}}.\label{nontrivial}
\end{equation}

Following \eqref{nontrivial} and the definition of a Lagrangian,
$\Phi(L)$ is a regular Lagrangian if and only if 
\[
rank\left(\Phi''\frac{\partial L}{\partial y^{j}}\frac{\partial L}{\partial y^{i}}+\Phi'g_{ij}\right)=n\text{ on }TM.
\]

The converse is trivial. \end{proof}

\begin{rem} For $\sigma=0$, that is, studying the deformations of
conservative mechanical system, we reobtain the result of \cite{key-3}:
$\Phi(L)$ is dynamically equivalent to $L$ if and only if $\Phi(t)=at+b,\, a,b\in\mathbb{\mathbb{R}},\, a\neq0$
or $L$ is a constant of motion for $S$ that is $S(L)=0$. In the
second case, $\Phi$ can be any differentiable function.

\end{rem}

We give an instance of immediate application of Theorem \ref{thm:main}.
Suppose that we have a semi-spray whose equations of motions are written
as $\delta_{S}L=\sigma$. In order to check if $S$ is Lagrangian
with respect to the alternative Lagrangian $\Phi(L)$ , it is enough
to verify that condition \eqref{eq:3} is satisfied. In the affirmative
case, we have to integrate equation (\ref{eq:2}) and obtain the deformation
$\Phi$. But this is possible only if the expression $-\frac{S(E_{L})}{S(L)\mathbf{C}(L)}$
can be written as an integrable function of the initial Lagrangian
$L$. Also, the matrix (\ref{eq:0}) has to be non-trivial, respectively
of maximal rank $n$, depending on what kind of Lagrangian $\Phi(L)$
we search for, a singular or a regular one.

In the last section we will consider some classes of Lagrangian's
deformations, inspired by the references, that satisfy the conditions
\[
\delta_{S}L=\frac{S(E_{L})}{\mathbf{C}(L)}d_{J}L,\,\,\exists\, f:\,\mathbb{R}\rightarrow\mathbb{R}:\,-\frac{S(E_{L})}{S(L)\mathbf{C}(L)}=f(L)
\]
and condition (\ref{eq:0}), where $f\,:\mathbb{\, R\rightarrow\mathbb{R}}$
is an integrable function.

If we suppose that $\Phi$ is a strictly increasing function, then
\[
\Phi(L)=\int\exp\left(\int f(L)dL\right)dL,\,\,\delta_{S}\Phi(L)=0.
\]

\subsection*{Dissipative Forces}

Here we restrict Theorem 1 to the particular case when $\sigma$ is
given by a $d_{J}$-exact, semi-basic 1-form $\sigma$, which means
$\sigma$ = $d_{J}D$, for a function $D$ on $TM$. This case correspond
to dissipative mechanical systems. It should be noted that it includes
the classic dissipation of Rayleigh type \cite{Goldstein}, when the
function $D$ is negative definite, quadratic in the velocities.

If $\sigma$ = $d_{J}D$, the system of Lagrange equations can be
written as follows: 
\begin{equation}
\frac{d}{dt}\left(\frac{\partial L}{\partial\dot{x}^{i}}\right)-\frac{\partial L}{\partial x^{i}}=\frac{\partial D}{\partial\dot{x}^{i}}.\label{diss_EL_eq.}
\end{equation}

We want to recast a dissipative system into the form of original Euler-Lagrange
equations. One would try to find a different function $\Phi(L)$ such
that the Euler-Lagrange equations of $\Phi(L)$ are equivalent to
the given system.

One can easily check that in the dissipative case $S(E_{L})=\mathbb{\mathbf{C}}(D)$,
hence the first two conditions of Theorem 1 can be written as follows:
\[
\sigma=\frac{\mathbb{\mathbf{\mathbf{C}}}(D)}{\mathbf{\mathbf{\mathbb{\mathbf{C}}}}(L)}d_{J}L,\quad\frac{\Phi''(L)}{\Phi'(L)}=-\frac{\mathbb{\mathbf{\mathbf{C}}}(D)}{S(L)\mathbf{\mathbb{\mathbf{\mathbb{\mathbf{C}}}}}(L)}.
\]
For the Rayleigh type dissipation, using 
\[
S(E_{L})=2D<0,
\]
we can slightly adapt the two above conditions. 

The next example is from \cite{key-9}.

\begin{example} Let $M$ be a real, $2$-dimensional, differentiable
manifold. Consider the SODE 
\begin{equation}
\begin{cases}
\frac{d^{2}x^{1}}{dt^{2}}+ax^{1}+bx^{2}+\omega y^{1} & =0,\\
\frac{d^{2}x^{2}}{dt^{2}}-bx^{1}+ax^{2}-\omega y^{2} & =0,
\end{cases}\label{eq:dissipative}
\end{equation}
$a,b,\omega\in\mathbb{R}^{*}$ and its associated semi-spray $S=y^{1}\frac{\partial}{\partial x^{1}}+y^{2}\frac{\partial}{\partial x^{2}}-\left(ax^{1}+bx^{2}+\omega y^{1}\right)\frac{\partial}{\partial y^{1}}-\left(-bx^{1}+ax^{2}-\omega y^{2}\right)\frac{\partial}{\partial y^{2}}$.

We consider the kinetic energy Lagrangian 
\[
L(x,y)=\frac{1}{2}\left[(y^{1})^{2}+(y^{2})^{2}\right]
\]
and the semi-basic 1-form 
\[
\sigma=-\frac{1}{(y^{1})^{2}+(y^{2})^{2}}\left(ax^{1}y^{1}+bx^{2}y^{1}+\omega(y^{1})^{2}-bx^{1}y^{2}+ax^{2}y^{2}-\omega(y^{2})^{2}\right)\left(y^{1}dx^{1}+y^{2}dx^{2}\right).
\]
The SODE (\ref{eq:dissipative}) is equivalent with $\delta_{S}L=\sigma$.
Remark that $\sigma=d_{J}D$, with 
\[
D=-a\left(x^{1}y^{1}+x^{2}y^{2}\right)+b\left(x^{1}y^{2}-x^{2}y^{1}\right)+\frac{1}{2}\omega\left((y^{2})^{2}-(y^{1})^{2}\right).
\]
It is clear that $\mathbb{\mathbf{\mathbb{\mathbf{C}}}}(L)=2L$ which
implies $E_{L}=L$. Moreover, one can easily verify that $D$ satisfies
the condition $S(E_{L})=\mathbb{\mathbf{\mathbb{\mathbf{C}}}}(D)$
as follows: 
\begin{align*}
d_{J}L & =y^{1}\, dx^{1}+y^{2}\, dx^{2},\,\,\,\,\,\,\,\, S(L)=S(E_{L})=-y^{1}(ax^{1}+bx^{2}+wy^{1})+y^{2}(bx^{1}-ax^{2}+wy^{2})=\mathbb{\mathbf{\mathbb{\mathbf{C}}}}(D).
\end{align*}

Furthermore, we obtain the next equation $\frac{\Phi''(L)}{\Phi'(L)}=-\frac{1}{2L}$.

If we are searching for functions $\Phi$ such as $\Phi'>0$, it follows
that $\ln\Phi'=-\frac{1}{2}\ln L$ and hence $\Phi(L)=a\sqrt{L}+b$,
with $a,b\in\mathbb{R}$ some constants, $a>0$.

We can easy verify that $S$ is a Lagrangian vector field with its
associated Lagrangian 
\[
\tilde{L}(x,y)=a\sqrt{\left(y^{1}\right)^{2}+\left(y^{2}\right)^{2}}+b\quad\text{ and }\quad\delta_{S}\tilde{L}=0.
\]
\end{example}

\subsection*{Homogeneous Lagrangians}

Suppose that $S$ is a spray of Lagrangian type with covariant force
field $\sigma$ and the corresponding Lagrangian $L$. In this subsection,
we assume that $\sigma$ and $L$ are both homogeneous of order $p$.
It means that $[\mathbf{\mathbb{\mathbf{C}}},L]=\mathbf{\mathbf{\mathbb{\mathbf{C}}}}(L)=pL$
and $[\mathbf{C},\sigma]=\mathcal{L}_{\mathbf{C}}\sigma=p\sigma$.
We will work on $T_{0}M$, the tangent bundle with the zero section
removed.

If $p=1\Leftrightarrow\mathbf{\mathbf{\mathbb{\mathbf{C}}}}(L)=L$,
then $E_{L}=0$ and formula (\ref{eq:3}) implies $\sigma=0$. This
case was treated in \cite{key-3}. Therefore, we assume $p>1$. 

\begin{thm} \label{thm:hom}Let $S\in\mathfrak{X}(T_{0}M)$ be a
spray. Suppose that there exists a Lagrangian $L$ with positive values
and a semi-basic 1-form $\sigma$ on $T_{0}M$, such that $\delta_{S}L=\sigma$,
with both $L$ and $\sigma$ homogeneous of order $p>1$, Then there
exists a differentiable of class $C^{2}$, strictly increasing function
$\Phi:\mathbb{R}\rightarrow\mathbb{R}$, such that $S$ is a Lagrangian
vector field with the corresponding Lagrangian $\Phi(L)$, if and
only if 
\[
d_{J}L\wedge\sigma=0
\]
 and 
\begin{equation}
\left(\frac{1-p}{p}\frac{\partial L}{\partial y^{j}}\frac{\partial L}{\partial y^{i}}+L\frac{\partial^{2}L}{\partial y^{i}\partial y^{j}}\right)\neq O_{n}.\label{gijhomogenous}
\end{equation}
Moreover, the deformation $\Phi$ is given by 
\[
\Phi(L)=aL^{\frac{1}{p}}+b,\,\, a,b\in\mathbb{R}.
\]
\end{thm}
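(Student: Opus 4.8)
The plan is to derive Theorem~\ref{thm:hom} as a corollary of Theorem~\ref{thm:main} by exploiting the homogeneity hypotheses to make the general conditions \eqref{eq:3}, \eqref{eq:2}, \eqref{eq:0} explicit. First I would record the consequences of homogeneity of order $p$: since $\mathbf{C}(L)=pL$ we get $E_L=\mathbf{C}(L)-L=(p-1)L$, and since $S$ is a spray we have $[\mathbf{C},S]=S$, which combined with $\mathcal{L}_{\mathbf{C}}\sigma=p\sigma$ and $\delta_S L=\sigma$ lets us compute $S(E_L)$. Concretely, applying $\mathbf{C}$ to $S(L)$ and using $[\mathbf{C},S]=S$ together with the homogeneity of $L$ shows $S(L)$ is homogeneous of order $p$; hence $\mathbf{C}(S(L))=pS(L)$, and from $E_L=(p-1)L$ one obtains $S(E_L)=(p-1)S(L)$. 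Substituting these into \eqref{eq:3} gives $\sigma=\frac{(p-1)S(L)}{pL}\,d_JL$, so $\sigma$ is a scalar multiple of $d_JL$; this is exactly the statement $d_JL\wedge\sigma=0$ once we note (via $i_S$, as in the proof of Theorem~\ref{thm:main}) that the scalar factor is forced. Conversely, if $d_JL\wedge\sigma=0$ then $\sigma=h\,d_JL$ for some function $h$ on $T_0M$, and applying $i_S$ to this identity recovers $S(E_L)=h\,\mathbf{C}(L)$, i.e.\ $h=\frac{S(E_L)}{\mathbf{C}(L)}=\frac{(p-1)S(L)}{pL}$, so \eqref{eq:3} holds.

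Next I would solve the ODE \eqref{eq:2} in this homogeneous setting. We have
\[
\frac{\Phi''(L)}{\Phi'(L)}=-\frac{S(E_L)}{S(L)\mathbf{C}(L)}=-\frac{(p-1)S(L)}{S(L)\cdot pL}=-\frac{p-1}{p}\cdot\frac{1}{L},
\]
using $S(L)\neq0$ (which is part of the hypothesis of Theorem~\ref{thm:main}; here it is guaranteed because $\sigma\neq0$ unless we are in the trivial $p=1$ case already excluded, and in any case on $T_0M$ with $L>0$ homogeneous of order $p>1$ one checks $S(L)$ cannot vanish identically without forcing $\sigma=0$). Since $L>0$, integrating $\ln\Phi'=-\frac{p-1}{p}\ln L+\text{const}$ gives $\Phi'(L)=c\,L^{-(p-1)/p}$ with $c>0$ (strictly increasing), and a second integration yields $\Phi(L)=aL^{1/p}+b$ with $a>0$, $b\in\mathbb{R}$. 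This is the claimed form of the deformation.

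Finally I would translate the regularity/nontriviality condition \eqref{eq:0} into \eqref{gijhomogenous}. With $\Phi(L)=aL^{1/p}+b$ we compute $\Phi'=\frac{a}{p}L^{1/p-1}$ and $\Phi''=\frac{a}{p}\left(\frac1p-1\right)L^{1/p-2}=\frac{a(1-p)}{p^2}L^{1/p-2}$, so
\[
\widetilde g_{ij}=\Phi''\frac{\partial L}{\partial y^i}\frac{\partial L}{\partial y^j}+\Phi' g_{ij}
=\frac{a}{p}L^{1/p-2}\left(\frac{1-p}{p}\frac{\partial L}{\partial y^i}\frac{\partial L}{\partial y^j}+L\,g_{ij}\right),
\]
and since $\frac{a}{p}L^{1/p-2}>0$ on $T_0M$, the matrix $(\widetilde g_{ij})$ is non-null (respectively of rank $n$) if and only if $\left(\frac{1-p}{p}\frac{\partial L}{\partial y^j}\frac{\partial L}{\partial y^i}+L\,\frac{\partial^2 L}{\partial y^i\partial y^j}\right)\neq O_n$, which is \eqref{gijhomogenous}. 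Assembling the three equivalences and invoking Theorem~\ref{thm:main} completes the proof. The main obstacle I anticipate is the careful bookkeeping showing $S(E_L)=(p-1)S(L)$ from $[\mathbf{C},S]=S$ and the homogeneity of $L$ — one must be precise about applying $\mathbf{C}$ to the function $S(L)$ and using $\mathbf{C}(S(L))-S(\mathbf{C}(L))=[\mathbf{C},S](L)=S(L)$ — and the parallel point of justifying $S(L)\neq0$ on $T_0M$ so that division in \eqref{eq:2} is legitimate; everything else is substitution into Theorem~\ref{thm:main}.
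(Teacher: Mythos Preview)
Your proposal is correct and follows essentially the same route as the paper: specialize Theorem~\ref{thm:main} using $\mathbf{C}(L)=pL$ and $E_L=(p-1)L$, show \eqref{eq:3} is equivalent to $d_JL\wedge\sigma=0$ (via $i_S$), integrate \eqref{eq:2} to get $\Phi(L)=aL^{1/p}+b$, and reduce \eqref{eq:0} to \eqref{gijhomogenous} by factoring out $\frac{a}{p}L^{1/p-2}$. One remark: the step you flag as the ``main obstacle'' is actually trivial---since $E_L=(p-1)L$ with $p$ constant, $S(E_L)=(p-1)S(L)$ follows immediately from linearity of $S$, and no appeal to $[\mathbf{C},S]=S$ or to the homogeneity of $S(L)$ is needed there.
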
 \begin{proof} Suppose that $S$ is a Lagrangian vector
field with the corresponding Lagrangian $\Phi(L)$: $\delta_{S}\Phi(L)=0$.
From the above theorem, using $\mathbf{\mathbb{C}}(L)=pL$ and $E_{L}=(p-1)L$,
we deduce 
\begin{equation}
\frac{\Phi''}{\Phi'}=\left(\frac{1}{p}-1\right)\frac{1}{L}\label{eq:5}
\end{equation}

and 
\begin{equation}
\sigma=\left(1-\frac{1}{p}\right)S\left(\ln L\right)d_{J}L.\label{eq:sigma-hom}
\end{equation}
It follows that $d_{J}L\wedge\sigma=0$.

Conversely, suppose that $\sigma$ is homogeneous of order $p>1$
and $d_{J}L\wedge\sigma=0$. Applying $i_{S}$ to this relation, it
yields 
\[
\mathbf{\mathbb{\mathbf{C}}}(L)\sigma-i_{S}\sigma d_{J}L=0\Longleftrightarrow pL\sigma-(p-1)S(L)d_{J}L=0\Longleftrightarrow\sigma=\left(1-\frac{1}{p}\right)\frac{S\left(L\right)}{L}d_{J}L.
\]
Hence $\sigma=\frac{S(E_{L})}{\mathbf{C}(L)}d_{J}L$. Using again
the last theorem, it results that $\delta_{S}\Phi(L)=0$.

For $L$ with positive values and $\Phi$ a strictly increasing function,
we can easily integrate the equation (\ref{eq:5}) and obtain 
\[
\Phi(L)=aL^{\frac{1}{p}}+b,\,\, a,b\in\mathbb{R}.
\]

The fact that the Hessian matrix of $\Phi(L)$ is non-trivial is equivalent
to the condition (\ref{gijhomogenous}).

\end{proof} 

Note that we started with a Lagrangian $L$ homogeneous of order $p>1$,
and for $b=0$, the deformed Lagrangian $\Phi(L)$ is homogeneous
of order 1, $\mathbf{C}\left(\Phi(L)\right)=\Phi(L)$.

\section{Examples }

All the examples of this section satisfy the conditions 
\begin{equation}
\delta_{S}L=\frac{S(E_{L})}{\mathbf{C}(L)}\, d_{J}L,\,\,\exists\, f:\,\mathbb{R}\longrightarrow\mathbb{R}:\,-\frac{S(E_{L})}{S(L)\mathbf{C}(L)}=f(L),\label{eq:*-1}
\end{equation}
with $f$ an integrable function. The examples are divided into the
following cases:\\

\begin{enumerate}
\item \cite{C-N,key-7-1} If $f$ is a constant function, $f(L)=\gamma=\text{constant}$,
it follows $\Phi(L)=\frac{1}{\gamma}\exp(\gamma L)+a,\, a\in\mathbb{R}$. 
\item \cite{key-1,C-N,key-7-1,key-8-1} If $f(L)=\frac{\gamma}{L+a},\,\,\gamma\in\mathbb{R}^{*}\backslash\{-1\}$,
$a\in\mathbb{R}$, then $\Phi(L)=\frac{1}{1+\gamma}(L+a)^{1+\gamma}+b$,
$b\in\mathbb{R}$. 

\begin{enumerate}
\item As a particular subcase, $f(L)=\left(\frac{1}{p}-1\right)\frac{1}{L},\,\, p\in\mathbb{N},\, p>1$.
It follows $\Phi(L)=aL^{\frac{1}{p}}+b,\, a,b\in\mathbb{R}$. This
deformation is obtained when we search for Lagrangians that are homogeneous
of order $p>1$, as the Lagrangians studied in Finsler geometry \cite{key-0,key-1}.
In fact, the most important class of homogeneous Lagrangians is the
one of Finsler functions which have $p=2$. 
\item {\small{The deformation $\Phi(L)=\frac{b}{2}L^{2}+aL+c$ was used
by }}Kawaguchi, T. and Miron, R. \cite{key-8-1}. They studied the
geometry of a generalized Lagrange space ( obtained from a Riemannian
space) using a deformation of the Riemannian metric. 
\end{enumerate}
\item \cite{key-1} If $f(L)=-\frac{1}{L+a}$, $a\in\mathbb{R}$, then it
follows $\Phi(L)=b\ln(L+a)+c$, $b,c\in\mathbb{R}$. 
\item \cite{key-1} If $f(L)=-\frac{2c}{cL+d},\: c,d\in\mathbb{R}$, then
it follows $\Phi(L)=\frac{aL+b}{cL+d},\, a,b\in\mathbb{R},\,\, ad-bc\neq0$. 
\end{enumerate}
We start with an example corresponding to the first class of Lagrangian's
deformations enumerated above.

\begin{example} Let $M$ be a real, $3$-dimensional, differentiable
manifold. Consider the SODE 
\begin{equation}
\begin{cases}
\frac{d^{2}x^{1}}{dt^{2}} & =0,\\
\frac{d^{2}x^{2}}{dt^{2}} & =0,\\
\frac{d^{2}x^{3}}{dt^{2}}+x^{1}y^{1} & =0.
\end{cases}\label{case1}
\end{equation}

The associated semi-spray is $S=y^{1}\frac{\partial}{\partial x^{1}}+y^{2}\frac{\partial}{\partial x^{2}}+y^{3}\frac{\partial}{\partial x^{3}}-x^{1}y^{1}\frac{\partial}{\partial y^{3}}$.
We consider the regular Lagrangian 
\[
L(x,y):=a+\frac{1}{b}\ln\left[b\left(x^{1}y^{1}+x^{2}y^{2}+x^{3}y^{3}+(y^{1})^{2}+(y^{2})^{2}\right)-c\right],
\]

with $a,b,c\in\mathbb{R}^{*}$, defined on an open set such that $x^{1}y^{1}+x^{2}y^{2}+x^{3}y^{3}+(y^{1})^{2}+(y^{2})^{2}>\frac{c}{b}$
and the semi-basic 1-form 
\[
\,\sigma=\frac{b\left[x^{1}x^{3}y^{1}-(y^{1})^{2}-(y^{2})^{2}-(y^{3})^{2}\right]}{[b(x^{1}y^{1}+x^{2}y^{2}+x^{3}y^{3}+(y^{1})^{2}+(y^{2})^{2})-c]^{2}}\left[(x^{1}+2y^{1})dx^{1}+(x^{2}+2y^{2})dx^{2}+x^{3}dx^{3}\right].
\]

By direct computations, we obtain that the system (\ref{case1}) is
equivalent to $\delta_{S}L=\sigma$ with $L$ and $\sigma$ defined
above. We use 
\begin{align*}
S(L) & =\exp\left(b(a-L)\right)\left[(y^{1})^{2}+(y^{2})^{2}+(y^{3})^{2}-x^{1}x^{3}y^{1}\right],\\
d_{J}L & =\exp\left(b(a-L)\right)\left[(x^{1}+2y^{1})dx^{1}+(x^{2}+2y^{2})dx^{2}+x^{3}dx^{3}\right],\\
\mathbf{C}(L) & =\exp\left(b(a-L)\right)\left[x^{1}y^{1}+x^{2}y^{2}+x^{3}y^{3}+2(y^{1})^{2}+2(y^{2})^{2}\right],\\
S(E_{L}) & =-b\mathbb{C}(L)\, S(L),\,\,\,\,\,\,\,\,\,\delta_{S}L=-bS(L)\, d_{J}L.
\end{align*}
We verify that both conditions (\ref{eq:*-1}) are satisfied, with
$f(L)=b$. Hence, the deformed Lagrangian is given by 
\[
\Phi(L)(x,y)=\frac{1}{b}\exp\left(ab\right)\left[b\left(x^{1}y^{1}+x^{2}y^{2}+x^{3}y^{3}+(y^{1})^{2}+(y^{2})^{2}\right)-c\right].
\]
One can easily see that $\Phi(L)$ is a regular Lagrangian. \end{example} 

The next example corresponds to the second class of Lagrangian's deformations.

\begin{example} \cite{key-5,key-8} Consider the Li\'enard-type
second-order non-linear differential equation of the form 
\begin{equation}
\frac{d^{2}x}{dt^{2}}+g(x)\frac{dx}{dt}+h(x)=0,\label{eq:Lienard}
\end{equation}
where $g$ and $h$ are real, smooth functions of $x$, defined on
an interval. It plays an important role in many areas of applied sciences,
cardiology, neurology, biology, mechanics, seismology, chemistry,
physics, and cosmology.

If the functions $g,\, h$ satisfy the Chiellini integrability condition
\[
\frac{d}{dx}\left(\frac{h}{g}\right)=kg,
\]
with $k\in\mathbb{R}^{*}$, then it is possible to construct exact
solutions for a first-order Abel equation of the first kind associated
to (\ref{eq:Lienard}).

We consider the Li\'enard equation (\ref{eq:Lienard}) with $g,\, h$
satisfying Chiellini's condition with $k$ of the form $-\alpha(\alpha+1)$
and $\alpha\in\mathbb{R}^{*}$.

Hence, equation (\ref{eq:Lienard}) can be written in the form $\delta_{S}L=\sigma$,
with $S=y\frac{\partial}{\partial x}-\left(g(x)y+h(x)\right)\frac{\partial}{\partial y}$
a semi-spray on an $1$-dimensional, real, differentiable manifold,
$\sigma=2\left(\frac{1}{\alpha}h(x)-g(x)y\right)dx$ a semi-basic
1-form and the Lagrangian 
\[
L(x,y)=\left(y-\frac{1}{\alpha}\frac{h(x)}{g(x)}\right)^{2}.
\]

We can show that $\sigma$ satisfies the conditions (\ref{eq:*-1}),
by the following computations: 
\begin{eqnarray*}
\mathbf{\mathbb{\mathbf{C}}}(L) & = & 2y\left(y-\frac{1}{\alpha}\frac{h(x)}{g(x)}\right),\,\,\,\,\,\,\,\,\,\,\,\,\ E_{L}=y^{2}-\frac{1}{\alpha^{2}}\left(\frac{h(x)}{g(x)}\right)^{2}\\
S(L) & = & 2\alpha g(x)L,\,\,\,\,\,\,\,\,\,\,\,\,\,\,\,\,\,\,\,\,\,\,\,\,\,\,\, S(E_{L})=-g(x)\mathbf{C}(L),\,\,\,\,\,\,\,\,\,\,\,\,\,\,\,\,\,\,\,\,\, d_{J}L=-\frac{1}{g(x)}\sigma.
\end{eqnarray*}
Moreover, $f(L)=-\frac{1}{2\alpha L}$, hence the deformed Lagrangian
is given by 
\[
\Phi(L)=\frac{2\alpha\, a}{2\alpha+1}L^{\frac{2\alpha+1}{2\alpha}}+b,\,\, a,b\in\mathbb{R}.
\]
We know from theorem 1 that for any $a,b\in\mathbb{R},$ $\Phi(L)$
verifies $\delta_{S}\Phi(L)=0$. For $b=0$ and $a=\frac{\alpha}{2(\alpha+1)}$
we get

\[
\Phi(L)(x,y)=\frac{\alpha^{2}}{(\alpha+1)(2\alpha+1)}\left(y-\frac{1}{\alpha}\frac{h(x)}{g(x)}\right)^{\frac{2\alpha+1}{\alpha}}.
\]
This Lagrangian was obtained in \cite{key-5} using the Jacobi Last
Multiplier method. \end{example}

The next example is subordinated to the third class of Lagrangian's
deformations. 

\begin{example} Let $M$ be a real, $3$-dimensional, differentiable
manifold. Consider the SODE 
\begin{equation}
\begin{cases}
\frac{d^{2}x^{1}}{dt^{2}} & =0,\\
\frac{d^{2}x^{2}}{dt^{2}}+2x^{2} & =0,\\
\frac{d^{2}x^{3}}{dt^{2}} & =0,
\end{cases}\label{case31}
\end{equation}
and the associated semi-spray is $S=y^{1}\frac{\partial}{\partial x^{1}}+y^{2}\frac{\partial}{\partial x^{2}}+y^{3}\frac{\partial}{\partial x^{3}}-2x^{2}\frac{\partial}{\partial y^{2}}$.
The SODE (\ref{case31}) is equivalent to $\delta_{S}L=\sigma$, for
the regular Lagrangian 
\[
L(x^{2},y^{1},y^{2},y^{3}):=f(y^{1})g(y^{3})\,\exp\left(a\left[\frac{1}{2}(y^{2})^{2}-(x^{2})^{2}\right]+by^{2}\right),
\]
where $f,\, g$ are smooth real functions depending only on $y^{1},\, y^{3}$,
respectively, with positive values, $a,b\in\mathbb{R}^{*}$ and the
semi-basic 1-form on $TM$ defined by 
\[
\sigma=-2x^{2}(2ay^{2}+b)\exp\left(a\left[\frac{1}{2}(y^{2})^{2}-(x^{2})^{2}\right]+by^{2}\right)\left(f'gdx^{1}+(b+ay^{2})fgdx^{2}+g'fdx^{3}\right).
\]
Using some computations we check that $\sigma$ satisfies the conditions
(\ref{eq:*-1}): 
\begin{align*}
d_{J}L & =L\left(\frac{f'}{f}(y^{1})dx^{1}+(b+ay^{2})dx^{2}+\frac{g'}{g}(y^{3})dx^{3}\right),\,\,\,\,\,\,\,\,\,\,\,\,\,\,\,\,\,\,\,\,\,\, S(L)=-2x^{2}(2ay^{2}+b)L,\\
\mathbf{\mathbb{\mathbf{C}}}(L) & =\left[y^{1}\frac{f'}{f}(y^{1})+y^{2}(ay^{2}+b)+y^{3}\frac{g'}{g}(y^{3})\right]L,\,\,\,\,\,\,\,\,\,\, S(E_{L})=\frac{\mathbf{\mathbf{C}}(L)S(L)}{L},\,\,\,\,\,\,\,\,\,\,\,\delta_{S}L=\frac{S(E_{L})}{\mathcal{\mathbf{\mathbb{\mathbf{C}}}}(L)}d_{J}L.
\end{align*}
Moreover, $f(L)=-\frac{1}{L}$. Then the deformed Lagrangian is given
by 
\[
\Phi(L)(x,y)=c\ln\left(f(y^{1})g(y^{3})\right)+a\left(a\left[\frac{1}{2}(y^{2})^{2}-(x^{2})^{2}\right]+by^{2}\right)+d,\,\, c,d\in\mathbb{R},\, c\neq0.
\]
If both functions $f''f-f^{2},\, g''g-g^{2}$ are non identically
zero, then $\Phi(L)$ is a regular Lagrangian. \end{example}

The following example correspond to the forth class of Lagrangian's
deformations. 

\begin{example} Let $M$ be a real, $3$-dimensional, differentiable
manifold. Consider the SODE 
\begin{equation}
\begin{cases}
\frac{d^{2}x^{1}}{dt^{2}} & =0,\\
\frac{d^{2}x^{2}}{dt^{2}} & =0,\\
\frac{d^{2}x^{3}}{dt^{2}} & =0.
\end{cases}\label{case42}
\end{equation}
The associated flat spray is $S=y^{1}\frac{\partial}{\partial x^{1}}+y^{2}\frac{\partial}{\partial x^{2}}+y^{3}\frac{\partial}{\partial x^{3}}$.
The SODE (\ref{case42}) is equivalent to $\delta_{S}L=\sigma$, with
the Lagrangian

\[
L(x,y):=-\frac{d}{c}-\frac{1}{ac^{2}\left[x^{1}y^{1}+y^{2}x^{2}+x^{3}y^{3}+(y^{1}y^{2}y^{3})^{2}+b\right]},
\]

and the semi-basic 1-form on TM {\small{{{ 
\[
\sigma=-\frac{2\,\left[(y^{1})^{2}+(y^{2})^{2}+(y^{3})^{2}\right]}{ac^{2}\left[x^{1}y^{1}+y^{2}x^{2}+x^{3}y^{3}+(y^{1}y^{2}y^{3})^{2}+b\right]^{3}}\left[\left(x^{1}+2y^{1}(y^{2}y^{3})^{2}\right)dx^{1}+\left(x^{2}+2y^{2}(y^{1}y^{3})^{2}\right)dx^{2}+\left(x^{3}+2y^{3}(y^{1}y^{2})^{2}\right)dx^{3}\right],
\]
}}}} where $c,\, a\in\mathbb{R}^{*}$ and $d,\, b\in\mathbb{R}$.
We choose the domain of $L$ such that $x^{1}y^{1}+y^{2}x^{2}+x^{3}y^{3}+(y^{1}y^{2}y^{3})^{2}+b\neq0$. 

By direct computations, we get 
\begin{align*}
d_{J}L & =a\left(cL+d\right)^{2}\left[\left(x^{1}+2y^{1}(y^{2}y^{3})^{2}\right)dx^{1}+\left(x^{2}+2y^{2}(y^{1}y^{3})^{2}\right)dx^{2}+\left(x^{3}+2y^{3}(y^{1}y^{2})^{2}\right)dx^{3}\right],\\
\mathbf{\mathbb{\mathbf{C}}}(L) & =a\left(cL+d\right)^{2}\,\left[5\left(y^{1}y^{2}y^{3}\right)^{2}-b-\frac{1}{a\left(cL+d\right)}\right],\,\,\,\, S(L)=a\left(cL+d\right)^{2}\left[(y^{1})^{2}+(y^{2})^{2}+(y^{3})^{2}\right],\\
S(E_{L}) & =\frac{2c}{(cL+d)}\mathbb{C}(L)S(L),\,\,\,\,\,\,\,\,\delta_{S}L=\frac{2c}{(cL+d)}S(L)d_{J}L.
\end{align*}
Hence $\sigma$ satisfies the conditions (\ref{eq:*-1}) and $f(L)=-\frac{2c}{cL+d}$.
Therefore, the deformed Lagrangian is {\small{{{ 
\[
\Phi(L)(x,y)=-ca\left[x^{1}y^{1}+x^{2}y^{2}+x^{3}y^{3}+\left(y^{1}y^{2}y^{3}\right)^{2}+b\right]+\frac{1}{c}\left[1+cda\left(x^{1}y^{1}+x^{2}y^{2}+x^{3}y^{3}+\left(y^{1}y^{2}y^{3}\right)^{2}+b\right)\right].
\]
}}}} In this case $L$ and $\Phi(L)$ are both regular Lagrangians.
\end{example} 

The last example is for the case of homogeneous Lagrangians. Actually,
it represents a special case of the conformal transformation of the
ecological metric \cite{key-0,key-1} which is given by $\overline{L}(x,y)=\exp(p\,\phi(x))[(y^{1})^{p}+....+(y^{n})^{p}]$,
where $n=dim(M)$, by taking $n=3,\, p=2,\,\phi(x)=x^{1}$.

\begin{example} Let $M$ be a real, $3$-dimensional, differentiable
manifold. Consider the SODE 
\begin{equation}
\begin{cases}
\frac{d^{2}x^{1}}{dt^{2}}-\left[(y^{1})^{2}+(y^{2})^{2}+(y^{3})^{2}\right] & =0,\\
\frac{d^{2}x^{2}}{dt^{2}} & =0,\\
\frac{d^{2}x^{3}}{dt^{2}} & =0,
\end{cases}\label{homogenousexample}
\end{equation}
and its associated spray $S=y^{1}\frac{\partial}{\partial x^{1}}+y^{2}\frac{\partial}{\partial x^{2}}+y^{3}\frac{\partial}{\partial x^{3}}+\left[(y^{1})^{2}+(y^{2})^{2}+(y^{3})^{2}\right]\frac{\partial}{\partial y^{1}}$.

We consider the Lagrangian 
\[
L(x,y)=\frac{1}{2}\exp(2x^{1})\left[(y^{1})^{2}+(y^{2})^{2}+(y^{3})^{2}\right].
\]
It is clear that (\ref{homogenousexample}) is equivalent to $\delta_{S}L=\sigma$,with
$\sigma=2y^{1}\exp(2x^{1})\left(y^{1}dx^{1}+y^{2}dx^{2}+y^{3}dx^{3}\right)=2y^{1}d_{J}L$.
Both $L$ and $\sigma$ are homogeneous of order $2$ and $\sigma\wedge d_{J}L=0$.
Applying Theorem \ref{thm:hom}, we obtain the deformed Lagrangian
\[
\Phi(L)=a\sqrt{L}+b=\frac{a}{\sqrt{2}}\exp(x^{1})\sqrt{(y^{1})^{2}+(y^{2})^{2}+(y^{3})^{2}}+b,\,\, a,b\in\mathbb{R}.
\]
One can easily show that $\Phi(L)$ is a singular Lagrangian. \end{example} 

Let us conclude our work. The differential equations associated to
a mechanical system usually contains a part related to a Lagrangian
function and a part related to a non-conservative force. We investigate
whether the given system can be seen to be equivalent with a purely
Lagrangian system, where the new Lagrangian function is found as a
deformation of the original Lagrangian. The conditions for this to
happen are further exploited for some specific classes of mechanical
systems, namely homogeneous and dissipative ones. Moreover, we give
many examples and show how the algorithms described in the proofs
of Theorem 1 and Theorem 2 work, how our method is easy to handle
and how one can get the corresponding alternative Lagrangian in the
affirmative case.

\subsection*{Acknowledgments}

We express our thanks to Professors J. F. Cariñena and I. Bucataru
for their advises and suggestions. Ebtsam H. Taha has been partially
supported by ``Erasmus+ Mobility Project for Higher Education Students
and Staff with Partner Countries'' and thanks Dr. Salah Elgendi for
fruitful discussions.

\end{document}